\documentclass{svjour3}

\smartqed

\usepackage{amsmath,amssymb,bm}
\usepackage{graphicx}
\usepackage{booktabs}
\usepackage{multirow}
\usepackage{color}
\usepackage{mathrsfs}
\usepackage{dsfont}
\usepackage{cases}
\usepackage{epstopdf}
\usepackage{enumitem}
\usepackage{mathtools}

\allowdisplaybreaks

\newtheorem{thm}{Theorem}
\newtheorem{prop}{Proposition}

\newtheorem{rem}{Remark}
\newtheorem{exa}{Example}
\numberwithin{equation}{section}

\begin{document}

\title{Positivity-preserving scalar auxiliary variable schemes for gradient flows via a quadratic reformulation}

\titlerunning{Positivity-preserving SAV schemes}

\author{Qiong-Ao Huang \and Zhi-Hao Liu \and Ying-Wei Wang \and Li-Na Yan}

\authorrunning{Q.-A. Huang et al.}

\institute{Qiong-Ao Huang \at
  School of Mathematics and Statistics, Henan University, Kaifeng 475004, China \\
  Center for Applied Mathematics of Henan Province, Henan University, Zhengzhou 450046, China \\
  \email{huangqiongao@henu.edu.cn}
\and
  Zhi-Hao Liu \and Ying-Wei Wang \and Li-Na Yan \at
  School of Mathematics and Statistics, Henan University, Kaifeng 475004, China \\
  \email{zhihaoliu@henu.edu.cn; wangyingwei@henu.edu.cn; yanlina@henu.edu.cn}
}

\date{Received: date / Accepted: date}

\maketitle

\begin{abstract}
The scalar auxiliary variable (SAV) method replaces the nonlinear part of the free energy
by a positive scalar $r(t)=\sqrt{\mathcal{E}_{_\mathcal{N}}[\phi]+C}>0$,
thereby yielding linear, unconditionally energy-stable schemes for gradient flows.
At the discrete level, however, the standard backward Euler and Crank--Nicolson discretizations
provide no guarantee that the computed $r^{n+1}$ remains positive,
an inconsistency with the continuous definition that contradicts the
square-root ansatz and may compromise long-time robustness.
Although the SAV method has been widely applied,
this subtle but consequential issue has received little attention.
We first characterize this failure quantitatively by deriving a sharp
criterion and a sufficient condition on the time step size, and construct an
explicit counterexample showing that sign loss occurs for parameters of practical relevance.
Rather than modifying the definition of $r$ as in existing positivity-preserving variants,
we retain the square-root form and reformulate the discrete evolution from $r_t$ to $(r^{2})_t$,
which converts the scalar equation into a convex quadratic with a strictly
negative constant term, always yielding a unique positive root.
For the Crank--Nicolson scheme, the product-form discretization $r^{n+1}r^{n}$
preserves this quadratic structure, while conventional alternatives do not.
The resulting schemes incur the same computational cost as the original
SAV method and are proved unconditionally energy-stable.
Numerical experiments for the Cahn--Hilliard equation confirm the predicted positivity, energy stability, and convergence rates.

\keywords{Scalar auxiliary variable \and positivity preservation \and gradient flows
          \and unconditional energy stability \and backward Euler \and Crank--Nicolson}

\subclass{65M12 \and 65M70 \and 35K35 \and 35K55}

\end{abstract}

%===============================================================
\section{Introduction}
\label{sec1}

Gradient flows that dissipate a free energy functional $\mathcal{E}[\phi]$ are central
to modelling in physics, materials science, and image processing~\cite{BaiX23,Huang19a,Huang19b}.
Numerical schemes that inherit this dissipative structure at the discrete level
are therefore highly desirable.

We focus on free energies of the form
\begin{equation}
\mathcal{E}[\phi(\bm{x},t)]=\frac{1}{2}(\phi,\mathcal{L}\phi)
+\mathcal{E}_{_\mathcal{N}}[\phi],\quad (\bm{x},t)\in\Omega\times[0,T],
\label{eq1}
\end{equation}
where $(\cdot,\cdot)$ denotes the $L^{2}$ inner product,
$\mathcal{L}$ is a linear, self-adjoint, non-negative operator,
and $\mathcal{E}_{_\mathcal{N}}[\phi]>-C$ (i.e., bounded below) contains the
strongly nonlinear contribution.
The associated gradient flow with a mobility operator $\mathcal{G}$
(assumed to be self-adjoint, positive definite and invertible) reads
\begin{equation}
\left\{\begin{array}{l}
\phi_{t} = -\mathcal{G}\mu,\\[1mm]
\mu = \mathcal{L}\phi + \mathcal{N}(\phi),
\end{array}\right.
\quad\text{where}\quad\mathcal{N}(\phi):=\frac{\delta\mathcal{E}_{_\mathcal{N}}[\phi]}{\delta\phi},\label{eq2}
\end{equation}
subject to the initial condition $\phi(\bm{x},0)=\phi_{0}(\bm{x})$
and suitable boundary conditions (e.g., periodic or homogeneous Neumann).
Integration by parts then yields the fundamental energy dissipation law
\begin{equation}
\frac{\mathrm{d}\mathcal{E}[\phi]}{\mathrm{d}t}
=\left(\frac{\delta\mathcal{E}}{\delta\phi},\frac{\partial\phi}{\partial t}\right)
=-(\mu,\mathcal{G}\mu) \le 0.\label{eq3}
\end{equation}

Among various energy-stable methods such as convex splitting~\cite{Eyre98},
exponential time differencing~\cite{Du18,FuZ22},
invariant energy quadratization (IEQ)~\cite{XuZ19},
and Lagrange multiplier approaches~\cite{ChengQ20},
the scalar auxiliary variable (SAV) method~\cite{ShenJ18} stands out:
it yields linear, unconditionally energy-stable schemes without restrictive assumptions.
The original SAV approach introduces $r(t)=\sqrt{\mathcal{E}_{_\mathcal{N}}[\phi]+C}>0$
and rewrites the gradient flow as an equivalent augmented system.
Since its inception, the SAV framework has been refined in several directions.
The relaxed SAV~\cite{JiangM22,LiuZ24,Huang24,ShenJ26}
and weighted SAV~\cite{Huang26} variants
improve the consistency between $r^{n+1}$ and $\sqrt{\mathcal{E}_{_\mathcal{N}}[\phi^{n+1}]+C}$,
thereby enhancing accuracy for large time steps.
The exponential SAV~\cite{LiuZ20} guarantees positivity through a logarithmic reformulation,
while the generalized SAV~\cite{HuangF20,HuangF22} and gPAV~\cite{YangZ20} approaches
also preserve positivity by redefining the auxiliary variable.
A stabilized Crank--Nicolson SAV scheme~\cite{LiuZ26}
achieves positivity through a carefully designed stabilization parameter;
an upwind-SAV scheme~\cite{HuangJY23} preserves pointwise bounds
on the phase variable for degenerate Cahn--Hilliard equations
via finite-volume spatial discretization;
and extensions to PDEs with unbounded energy functionals~\cite{Kemmochi22} further enrich the framework.
A comprehensive account of the SAV framework appears in~\cite{ShenJ19};
broader surveys of phase field modelling and numerical approximations can be found in~\cite{DuF20}.

After time discretization of the original SAV system, two issues arise:
\begin{enumerate}[nosep]
  \item the computed $r^{n+1}$ is not constrained to equal
    $\sqrt{\mathcal{E}_{_\mathcal{N}}[\phi^{n+1}]+C}$, and the discrepancy
    can become significant when the time step is ``large'';
  \item $r^{n+1}>0$ cannot be guaranteed theoretically,
    contradicting the continuous definition $r(t)=\sqrt{\mathcal{E}_{_\mathcal{N}}[\phi]+C}>0$.
\end{enumerate}
The first issue has been effectively addressed by relaxed
and weighted SAV variants~\cite{JiangM22,LiuZ24,Huang24,Huang26}.
The second issue, namely guaranteeing positivity while retaining the
square-root definition $r=\sqrt{\mathcal{E}_{_\mathcal{N}}[\phi]+C}$,
has received surprisingly little attention despite the widespread use of the original SAV method.
The exponential SAV~\cite{LiuZ20} guarantees $r^{n+1}>0$
through a logarithmic change of variable,
yet the lower bound of $\ln r^{n+1}$ is not directly controlled,
which complicates convergence analysis.
The generalized SAV~\cite{HuangF20,HuangF22} also preserves positivity,
but its auxiliary variable evolution does not explicitly involve the associated nonlinear energy term.
To date, guaranteeing positivity within the original square-root definition has remained an open problem.

Preserving the original definition is of interest beyond aesthetic consistency.
The original SAV scheme remains the most widely implemented member
of the SAV family, serving as the foundation upon which relaxed, weighted
and other variants are built.
The relation $r=\sqrt{\mathcal{E}_{_\mathcal{N}}[\phi]+C}$ is the simplest
algebraic link between the auxiliary variable and the nonlinear
energy; replacing $r$ by a different function of
$\mathcal{E}_{_\mathcal{N}}[\phi]$ would alter the structure of the
scalar equation at the discrete level, generally leading to a
more complex algebraic problem than the compact quadratic form
derived in Section~\ref{sec3}.
Moreover, relaxed SAV variants take the original SAV
solution as the starting point for a post-processing correction;
if the base scheme already produces $r^{n+1}\le0$, the
subsequent correction step operates on a value that contradicts
the very definition of the auxiliary variable.
Fixing the sign-indeterminacy at the source, within the original
formulation, therefore strengthens the entire SAV framework at
no additional computational cost.

In this work, we resolve this question in two steps.
First, we diagnose the problem:
a sharp criterion for $r^{n+1}>0$ is derived,
a sufficient condition on time step size is established via an operator-norm bound,
and an explicit counterexample demonstrates that sign failure is not a pathological edge
case but a structural phenomenon that can occur for practically relevant parameters
and is particularly acute for solutions with significant high-frequency content.
Second, we resolve it at the structural level:
rather than altering the definition of
$r$, we retain the classical square-root form and reformulate the
discrete evolution from $r_t$ to $(r^{2})_t$.
This simple algebraic change converts the scalar auxiliary equation from a
linear equation with a sign-indefinite constant term into a convex quadratic
whose constant term is strictly negative,
thereby always yielding a unique positive root $r^{n+1}$ at every time step.
We construct positivity-preserving SAV schemes for both the backward Euler
and Crank--Nicolson discretizations,
prove their unconditional energy stability,
and show that they incur exactly the same computational cost as the
original SAV method.
For the Crank--Nicolson scheme, the product-form discretization $r^{n+1}r^{n}$
at the half-time level is the key design choice:
it preserves the quadratic structure essential for the positivity analysis,
while natural alternatives such as the arithmetic mean
$\frac{1}{2}[(r^{n+1})^{2}+(r^{n})^{2}]$ or the squared mean
$(\frac{r^{n+1}+r^{n}}{2})^{2}$ lead to cubic equations
that admit no comparable guarantee.
Numerical experiments confirm the predicted positivity, energy stability, and convergence rates.

The remainder of the paper is organized as follows.
Section~\ref{sec2} reviews the original SAV method, derives a sufficient
condition for positivity, constructs an explicit counterexample,
and diagnoses the structural origin of the sign-indeterminacy.
Section~\ref{sec3} develops the positivity-preserving backward Euler
and Crank--Nicolson SAV schemes and proves their unconditional energy stability.
Section~\ref{sec4} presents numerical results, and Section~\ref{sec5} gives concluding remarks.

%===============================================================
\section{The original SAV method and its sign-indeterminacy}
\label{sec2}

Following the original SAV approach~\cite{ShenJ18}, the free energy~\eqref{eq1}
can be rewritten as
\begin{equation}
\mathcal{E}[\phi,r] = \frac{1}{2}(\phi,\mathcal{L}\phi) + r^{2} - C,\quad\text{where}\quad
r(t) = \sqrt{\mathcal{E}_{_\mathcal{N}}[\phi] + C} > 0,
\label{eq5}
\end{equation}
and the gradient flow system~\eqref{eq2} is rewritten as
\begin{equation}
\left\{\begin{array}{l}
\phi_{t} = -\mathcal{G}\mu,\\[2mm]
\mu = \mathcal{L}\phi + \dfrac{r}{\sqrt{\mathcal{E}_{_\mathcal{N}}[\phi]+C}}\,\mathcal{N}(\phi),\\[5mm]
r_{t} = \dfrac{1}{2\sqrt{\mathcal{E}_{_\mathcal{N}}[\phi]+C}}\bigl(\mathcal{N}(\phi),\,\phi_{t}\bigr).
\end{array}\right.
\label{eq6}
\end{equation}

Discretizing~\eqref{eq6} with the backward Euler method yields
\begin{equation}
\left\{\begin{array}{l}
\dfrac{\phi^{n+1}-\phi^{n}}{\tau}=-\mathcal{G}\mu^{n+1},\\[3mm]
\mu^{n+1}=\mathcal{L}\phi^{n+1}
  +\dfrac{r^{n+1}}{\sqrt{\mathcal{E}_{_\mathcal{N}}[\phi^{n}]+C}}\mathcal{N}(\phi^{n}),\\[5mm]
\dfrac{r^{n+1}-r^{n}}{\tau}
 =\dfrac{1}{2\sqrt{\mathcal{E}_{_\mathcal{N}}[\phi^{n}]+C}}\Big(\mathcal{N}(\phi^{n}),
   \dfrac{\phi^{n+1}-\phi^{n}}{\tau}\Big),
\end{array}\right.\label{eq7}
\end{equation}
where $\tau>0$ is the time step size and $\chi^n$
(for $\chi=\phi,\mu,r$) represents the numerical approximation
of $\chi(t)$ at $t=t^n:=n\tau$.
Eliminating $\mu^{n+1}$ gives
\begin{equation}
(\mathcal{I} + \tau\mathcal{G}\mathcal{L})\phi^{n+1}
= \phi^{n} - \frac{\tau\,\mathcal{G}\mathcal{N}(\phi^{n})}{\sqrt{\mathcal{E}_{_\mathcal{N}}[\phi^{n}] + C}}\,r^{n+1}.
\label{eq8}
\end{equation}
By linear superposition, we set
\begin{equation}
\phi^{n+1} = p^{n+1} + r^{n+1} q^{n+1},
\label{eq9}
\end{equation}
where $p^{n+1}$ and $q^{n+1}$ solve the decoupled linear equations
\begin{equation}
(\mathcal{I} + \tau\mathcal{G}\mathcal{L})\,p^{n+1} = \phi^{n},\qquad
(\mathcal{I} + \tau\mathcal{G}\mathcal{L})\,q^{n+1}
= -\frac{\tau\,\mathcal{G}\mathcal{N}(\phi^{n})}{\sqrt{\mathcal{E}_{_\mathcal{N}}[\phi^{n}] + C}}.
\label{eq10}
\end{equation}
Substituting~\eqref{eq9} into the third equation of~\eqref{eq7}
leads to a scalar equation for $r^{n+1}$:
\begin{equation}
h(r^{n+1}) :=
\underbrace{\left[1 - \frac{(\mathcal{N}(\phi^{n}),\, q^{n+1})}{2\sqrt{\mathcal{E}_{_\mathcal{N}}[\phi^{n}]+C}}\right]}_{=:\mathcal{A}^{n+1}} r^{n+1}
-\underbrace{\left[r^{n} + \frac{(\mathcal{N}(\phi^{n}),\, p^{n+1}-\phi^{n})}{2\sqrt{\mathcal{E}_{_\mathcal{N}}[\phi^{n}]+C}}\right]}_{=:\mathcal{B}^{n+1}} = 0.
\label{eq11}
\end{equation}
From~\eqref{eq10} and the invertibility of $\mathcal{G}$, it follows that
\begin{equation}
- \frac{(\mathcal{N}(\phi^{n}),\, q^{n+1})}{\sqrt{\mathcal{E}_{_\mathcal{N}}[\phi^{n}] + C}}
= \frac{1}{\tau}\bigl((\mathcal{G}^{-1} + \tau\mathcal{L})q^{n+1},\, q^{n+1}\bigr) \ge 0,
\label{eq12}
\end{equation}
which implies that $\mathcal{A}^{n+1}\ge 1$,
so $h(\cdot)$ is a strictly increasing linear function and $r^{n+1}=\mathcal{B}^{n+1}/\mathcal{A}^{n+1}$.
Consequently $\operatorname{sgn}(r^{n+1})=\operatorname{sgn}(\mathcal{B}^{n+1})$.
However, the constant term $\mathcal{B}^{n+1}$ is the sum of the positive
quantity $r^{n}$ and a perturbation involving $\mathcal{N}(\phi^{n})$ whose
sign is indefinite; when the perturbation is sufficiently negative it can
overwhelm $r^{n}$, so $r^{n+1}>0$ cannot be guaranteed.

The Crank--Nicolson discretization of~\eqref{eq6} yields a scalar
equation of the same linear form as~\eqref{eq11}, with
$\mathcal{N}(\phi^{n})$ and $\mathcal{E}_{_\mathcal{N}}[\phi^{n}]$
replaced by their counterparts evaluated at an explicit second-order
approximation $\phi^{n+\frac{1}{2}}_{\ast}$ of $\phi(t^{n+\frac{1}{2}})$.
Its constant term is likewise sign-indefinite.
Thus the original SAV schemes, both backward Euler and Crank--Nicolson, offer
no theoretical guarantee that $r^{n+1}>0$, an inconsistency with the continuous
definition $r(t)>0$ that may undermine long-time numerical robustness.

\subsection{A sufficient condition for positivity}

To quantify precisely when positivity can fail, we now derive an explicit
sufficient condition on the time step $\tau$ that guarantees $r^{n+1}>0$.
Since $\operatorname{sgn}(r^{n+1})=\operatorname{sgn}(\mathcal{B}^{n+1})$,
it suffices to determine when $\mathcal{B}^{n+1}>0$.

From~\eqref{eq10} we have $p^{n+1}=(\mathcal{I}+\tau\mathcal{G}\mathcal{L})^{-1}\phi^{n}$, hence
\begin{equation}
p^{n+1}-\phi^{n}=[(\mathcal{I}+\tau\mathcal{G}\mathcal{L})^{-1}-\mathcal{I}]\phi^{n}
=-\tau(\mathcal{I}+\tau\mathcal{G}\mathcal{L})^{-1}\mathcal{G}\mathcal{L}\,\phi^{n}.
\label{eqrp2}
\end{equation}
Substituting this identity into the definition of $\mathcal{B}^{n+1}$ yields the \textbf{sharp criterion}
\begin{equation}
r^{n+1}>0 \;\Longleftrightarrow\; 2r^{n}\sqrt{\mathcal{E}_{_\mathcal{N}}[\phi^{n}]+C}
\;>\,\tau\,\Bigl(\mathcal{N}(\phi^{n}),\;
(\mathcal{I}+\tau\mathcal{G}\mathcal{L})^{-1}\mathcal{G}\mathcal{L}\,\phi^{n}\Bigr).\label{eq13}
\end{equation}
This is an exact reformulation of $\operatorname{sgn}(\mathcal{B}^{n+1})>0$;
the difficulty is that the inner product on the right-hand side has no a priori
sign and depends nonlinearly on $\tau$ through the resolvent
$(\mathcal{I}+\tau\mathcal{G}\mathcal{L})^{-1}$.
This criterion depends only on $\phi^{n}$,
$r^{n}$, the operators $\mathcal{G},\mathcal{L}$, and the time step
$\tau$; hence \eqref{eq13} provides a computable necessary and sufficient
condition for $r^{n+1}>0$ using only data available at step~$n$.

To obtain a practically verifiable sufficient condition, we bound the inner
product from above.  Define the symmetrized operator
\begin{equation}
\mathcal{S}:=\mathcal{G}^{\frac{1}{2}}\mathcal{L}\mathcal{G}^{\frac{1}{2}},
\label{eq14}
\end{equation}
which is self-adjoint and positive semidefinite because $\mathcal{L}$ and
$\mathcal{G}^{\frac{1}{2}}$ are.
(Here $\mathcal{G}^{\frac{1}{2}}$ denotes the unique self-adjoint positive
definite square root of $\mathcal{G}$, and
$\mathcal{G}^{-\frac{1}{2}}:=(\mathcal{G}^{\frac{1}{2}})^{-1}$.)
A direct computation gives the similarity transform
$\mathcal{G}\mathcal{L}= \mathcal{G}^{\frac{1}{2}}\mathcal{S}\,\mathcal{G}^{-\frac{1}{2}}$,
and therefore
\begin{align}
(\mathcal{I}+\tau\mathcal{G}\mathcal{L})^{-1}\mathcal{G}\mathcal{L}
&=\bigl(\mathcal{G}^{\frac{1}{2}}\mathcal{G}^{-\frac{1}{2}}
        +\tau\mathcal{G}^{\frac{1}{2}}\mathcal{S}\mathcal{G}^{-\frac{1}{2}}\bigr)^{-1}
   \mathcal{G}^{\frac{1}{2}}\mathcal{S}\mathcal{G}^{-\frac{1}{2}}\nonumber\\
&=\bigl(\mathcal{G}^{\frac{1}{2}}(\mathcal{I}+\tau \mathcal{S})\mathcal{G}^{-\frac{1}{2}}\bigr)^{-1}
   \mathcal{G}^{\frac{1}{2}}\mathcal{S}\mathcal{G}^{-\frac{1}{2}}\nonumber\\
&=\mathcal{G}^{\frac{1}{2}}(\mathcal{I}+\tau \mathcal{S})^{-1}\mathcal{S}\,\mathcal{G}^{-\frac{1}{2}}.
\label{eq15}
\end{align}
The operator $(\mathcal{I}+\tau\mathcal{S})^{-1}\mathcal{S}$ is self-adjoint
(because $\mathcal{S}$ is), and the spectral theorem together with the
monotonicity of $\lambda\mapsto\lambda/(1+\tau\lambda)$ on $[0,\infty)$ gives
\begin{equation}
\|(\mathcal{I}+\tau \mathcal{S})^{-1}\mathcal{S}\|
= \sup_{\lambda\in\sigma(\mathcal{S})}\frac{\lambda}{1+\tau\lambda}
= \frac{\rho}{1+\tau\rho},
\qquad \rho:=\|\mathcal{S}\|=\sup_{u\neq 0}\frac{\|\mathcal{S}u\|}{\|u\|}.
\label{eq16}
\end{equation}

Now we estimate the critical inner product:
\begin{align}
\bigl(\mathcal{N}(\phi^{n}),\,(\mathcal{I}+\tau\mathcal{G}\mathcal{L})^{-1}\mathcal{G}\mathcal{L}\,\phi^{n}\bigr)
&=\bigl(\mathcal{G}^{\frac{1}{2}}\mathcal{N}(\phi^{n}),\,
        (\mathcal{I}+\tau \mathcal{S})^{-1}\mathcal{S}\;
        \mathcal{G}^{-\frac{1}{2}}\phi^{n}\bigr)\nonumber\\
&\le \|(\mathcal{I}+\tau \mathcal{S})^{-1}\mathcal{S}\|\;
     \|\mathcal{G}^{\frac{1}{2}}\mathcal{N}(\phi^{n})\|\,
     \|\mathcal{G}^{-\frac{1}{2}}\phi^{n}\|\nonumber\\
&=\frac{\rho}{1+\tau\rho}\,
   \|\mathcal{G}^{\frac{1}{2}}\mathcal{N}(\phi^{n})\|\,
   \|\mathcal{G}^{-\frac{1}{2}}\phi^{n}\|.
\label{eq17}
\end{align}
The equality uses \eqref{eq15} and the self-adjointness of
$\mathcal{G}^{\frac{1}{2}}$, the inequality is Cauchy--Schwarz
combined with the operator norm, and the final step applies the
spectral bound \eqref{eq16}.

Combining~\eqref{eq13} with the estimate~\eqref{eq17}, we obtain the following
sufficient condition.

\begin{prop}[Sufficient condition for positivity]\label{prop1}
Consider the original SAV-BE scheme \eqref{eq7},
let $\mathcal{S}:=\mathcal{G}^{\frac{1}{2}}\mathcal{L}\mathcal{G}^{\frac{1}{2}}$
and $\rho:=\|\mathcal{S}\|$.
Assume $r^{n}>0$ and define
\begin{equation}
L:=2r^{n}\sqrt{\mathcal{E}_{_\mathcal{N}}[\phi^{n}]+C},\qquad
M:=\|\mathcal{G}^{\frac{1}{2}}\mathcal{N}(\phi^{n})\|\,
    \|\mathcal{G}^{-\frac{1}{2}}\phi^{n}\|.
\label{eq18}
\end{equation}
\begin{enumerate}[label={(\roman*)},nosep]
  \item If $L\ge M$, then $r^{n+1}>0$ for all $\tau>0$.
  \item If $L<M$, then $r^{n+1}>0$ for all $\tau$ satisfying
  \begin{equation}
  \tau<\frac{1}{\rho}\frac{L}{M-L}
       =\frac{1}{\rho}\,
         \frac{2r^{n}\sqrt{\mathcal{E}_{_\mathcal{N}}[\phi^{n}]+C}}
              {\|\mathcal{G}^{\frac{1}{2}}\mathcal{N}(\phi^{n})\|\,
               \|\mathcal{G}^{-\frac{1}{2}}\phi^{n}\|
               -2r^{n}\sqrt{\mathcal{E}_{_\mathcal{N}}[\phi^{n}]+C}}.
  \label{eq19}
  \end{equation}
\end{enumerate}
\end{prop}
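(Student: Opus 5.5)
The plan is to combine the sharp criterion \eqref{eq13} with the upper bound \eqref{eq17}, both of which are already established. By \eqref{eq13}, $r^{n+1}>0$ holds exactly when
\[
L>\tau\bigl(\mathcal{N}(\phi^{n}),(\mathcal{I}+\tau\mathcal{G}\mathcal{L})^{-1}\mathcal{G}\mathcal{L}\,\phi^{n}\bigr).
\]
By \eqref{eq17}, the right-hand side is at most $g(\tau):=\tau\rho M/(1+\tau\rho)$. It therefore suffices to show $g(\tau)<L$ under each hypothesis.

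The key observation is that $g$ is strictly increasing in $\tau$ when $\rho M>0$, and that $g(\tau)<M$ for every $\tau>0$, since $\tau\rho/(1+\tau\rho)<1$.

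\textbf{Case (i).} Suppose $L\ge M$. If $M>0$, then $g(\tau)<M\le L$ for every $\tau>0$, and positivity follows. If $M=0$ or $\rho=0$, then $g\equiv0$. In that case $L>0$ is needed, and it holds because $r^{n}>0$ and the square root $\sqrt{\mathcal{E}_{_\mathcal{N}}[\phi^{n}]+C}$ is positive by the assumption $\mathcal{E}_{_\mathcal{N}}>-C$. So $L>0=g(\tau)$, which settles the degenerate subcase.

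\textbf{Case (ii).} Suppose $L<M$. Then $M>L>0$. If $\rho=0$, the bound is trivial. Otherwise, since $1+\tau\rho>0$, solving $\tau\rho M<L(1+\tau\rho)$ amounts to $\tau\rho(M-L)<L$, that is, $\tau<L/(\rho(M-L))$, which is exactly \eqref{eq19}. Substituting the definitions \eqref{eq18} gives the explicit form.

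The main obstacle is minor: keeping track of the degenerate situations $\rho=0$ and $M=0$, where the bound \eqref{eq19} is read as $+\infty$. One should also confirm that the strict inequality in the criterion is preserved, which it is, because the chain is $\text{RHS}\le g(\tau)<L$.
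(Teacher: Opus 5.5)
Your proposal is correct and is essentially the paper's proof: both combine the sign identity behind the sharp criterion \eqref{eq13} with the bound \eqref{eq17}, reduce positivity to $\tau\rho(M-L)<L$, and then split on the sign of $M-L$. Your explicit treatment of the degenerate cases $\rho=0$ and $M=0$, where \eqref{eq19} must be read as $\tau<+\infty$, is a small refinement that the paper leaves implicit.
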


\begin{proof}
From the definition of $\mathcal{B}^{n+1}$ and~\eqref{eqrp2},
\begin{align}
\mathcal{B}^{n+1}
&=r^{n}+\frac{(\mathcal{N}(\phi^{n}),\,p^{n+1}-\phi^{n})}
                {2\sqrt{\mathcal{E}_{_\mathcal{N}}[\phi^{n}]+C}}\nonumber\\
&=\frac{1}{2\sqrt{\mathcal{E}_{_\mathcal{N}}[\phi^{n}]+C}}
   \Bigl[2r^{n}\sqrt{\mathcal{E}_{_\mathcal{N}}[\phi^{n}]+C}
        -\tau\bigl(\mathcal{N}(\phi^{n}),\,
          (\mathcal{I}+\tau\mathcal{G}\mathcal{L})^{-1}\mathcal{G}\mathcal{L}\,\phi^{n}\bigr)\Bigr]\nonumber\\
&\ge\frac{1}{2\sqrt{\mathcal{E}_{_\mathcal{N}}[\phi^{n}]+C}}
   \Bigl[L - \frac{\tau\rho}{1+\tau\rho}\,M\Bigr],
\label{eq20}
\end{align}
where the inequality follows from~\eqref{eq17}.
Since $\mathcal{A}^{n+1}\ge 1>0$,
$\operatorname{sgn}(r^{n+1})=\operatorname{sgn}(\mathcal{B}^{n+1})$.
A sufficient condition for $\mathcal{B}^{n+1}>0$ is therefore
$L > \frac{\tau\rho}{1+\tau\rho}M$, which is equivalent to
\begin{equation}
\tau\rho\,(M-L) < L.
\label{eq21}
\end{equation}
If $M\le L$, the left-hand side of~\eqref{eq21} is non-positive while $L>0$,
so the inequality holds for every $\tau>0$, giving case~(i).
If $M>L$, then~\eqref{eq21} is equivalent to
$\tau < \frac{1}{\rho}\frac{L}{M-L}$, which is precisely~\eqref{eq19}.
\end{proof}

In the ODE reduction ($\mathcal{G}=\mathcal{I}$,
$\mathcal{L}=\alpha\mathcal{I}$, $|\Omega|=1$),
the operator $(\mathcal{I}+\tau\mathcal{G}\mathcal{L})^{-1}\mathcal{G}\mathcal{L}$
reduces to $\frac{\alpha}{1+\tau\alpha}$, and
\eqref{eq13} becomes
$2r^{n}\sqrt{\mathcal{E}_{_\mathcal{N}}[\phi^{n}]+C}
> \frac{\tau\alpha}{1+\tau\alpha}\,\mathcal{N}(\phi^{n})\phi^{n}$
The sign of the right-hand side is governed entirely by
$\mathcal{N}(\phi^{n})\phi^{n}$.
If $\phi^{n}$ is bounded, this term can be estimated directly.
When $\mathcal{N}(\phi^{n})\phi^{n}\le 0$, the right-hand side is
non-positive and $r^{n+1}>0$ holds unconditionally.
When $\mathcal{N}(\phi^{n})\phi^{n}>0$, the right-hand side
is positive and, for sufficiently large $\tau\alpha$, may reach or
exceed the left-hand side, leading to $r^{n+1}\le 0$; this is the
mechanism underlying Example~\ref{exa1} below.
For PDE models the operator
$(\mathcal{I}+\tau\mathcal{G}\mathcal{L})^{-1}\mathcal{G}\mathcal{L}$
is non-local in space, and a simple pointwise estimate as in the
ODE case is no longer available; the operator-norm bound in
Proposition~\ref{prop1} provides one way to control this non-local
coupling, at the cost of being conservative.

\begin{rem}
Proposition~\ref{prop1} provides a \emph{sufficient} but not necessary
condition.  Two successive bounds intervene: the Cauchy--Schwarz inequality
in~\eqref{eq17} and the replacement of the spectral function
$\lambda\mapsto\lambda/(1+\tau\lambda)$ by its supremum $\rho/(1+\tau\rho)$.
Consequently, the time-step restriction~\eqref{eq19} may be considerably more
pessimistic than the sharp threshold~\eqref{eq13}.  Its principal value is
theoretical: it confirms that for any fixed $\phi^{n}$ satisfying the
hypotheses there always exists a sufficiently small $\tau$ for which the
original SAV scheme preserves positivity, and it identifies the product
$\|\mathcal{G}^{\frac{1}{2}}\mathcal{N}(\phi^{n})\|\,
\|\mathcal{G}^{-\frac{1}{2}}\phi^{n}\|$ as the key quantity controlling the
sign of $\mathcal{B}^{n+1}$.  When $\mathcal{G}$ and $\mathcal{L}$ commute
(as in Fourier spectral discretizations of constant-coefficient operators),
$\mathcal{G}^{\frac{1}{2}}\mathcal{L}\mathcal{G}^{\frac{1}{2}}
 =\mathcal{G}\mathcal{L}$, the weighted norms reduce to standard $L^{2}$-norms,
and $\rho=\|\mathcal{G}\mathcal{L}\|$.
\end{rem}

\subsection{An explicit counterexample}

To exhibit the failure concretely, and for ease of illustration,
we consider the ODE reduction
$\mathcal{G}=\mathcal{I}$, $\mathcal{L}=\alpha\mathcal{I}$ ($\alpha>0$),
where the state $\phi$ is a scalar and $|\Omega|=1$, so that
$\mathcal{E}_{_\mathcal{N}}[\phi]=\int_{\Omega}F(\phi)\,\mathrm{d}\bm{x}=F(\phi)$
with the standard double-well potential
\begin{equation}
F(\phi)=\frac{1}{4}(\phi^{2}-1)^{2},\qquad
\mathcal{N}(\phi)=F'(\phi)=\phi(\phi^{2}-1).
\label{eq22}
\end{equation}
We consider the first time step ($n=0$), where the initial data are
consistent: $r^{0}=\sqrt{\mathcal{E}_{_\mathcal{N}}[\phi^{0}]+C}$.

Under these simplifications we have $p^{1}=\frac{\phi^{0}}{1+\tau\alpha}$,
$\mathcal{A}^{1}=1+\frac{\tau}{2(1+\tau\alpha)}\frac{(\mathcal{N}(\phi^{0}))^{2}}{\mathcal{E}_{_\mathcal{N}}[\phi^{0}]+C}$,
and the sharp criterion~\eqref{eq13} reduces to
\begin{equation}
r^{1}>0\;\Longleftrightarrow\; 2\bigl(\mathcal{E}_{_\mathcal{N}}[\phi^{0}]+C\bigr)
\;>\,\frac{\tau\alpha}{1+\tau\alpha}\,\mathcal{N}(\phi^{0})\,\phi^{0}.
\label{eq23}
\end{equation}

\begin{exa}[Positivity failure at moderate time step]\label{exa1}
Choose
\begin{equation}
\alpha = 12,\qquad \tau = 0.1,\qquad C = 0,\qquad \phi^{0} = 2.
\label{eq25}
\end{equation}
Then
\begin{equation}
\mathcal{E}_{_\mathcal{N}}[\phi^{0}]=\frac{9}{4},\qquad
r^{0}=\frac{3}{2}> 0,\qquad
\mathcal{N}(\phi^{0})=6,\qquad
\mathcal{N}(\phi^{0})\,\phi^{0}=12.
\label{eq27}
\end{equation}
The coefficient and constant term work out to clean fractions:
\begin{equation}
\mathcal{A}^{1}=\frac{15}{11},\qquad
\mathcal{B}^{1}=-\frac{15}{22}.\label{eq29}
\end{equation}
Hence
\begin{equation}
r^{1} = \frac{\mathcal{B}^{1}}{\mathcal{A}^{1}}= -\frac{1}{2} < 0.\label{eq30}
\end{equation}
A single backward Euler step with $\tau=0.1$ therefore destroys positivity even though $r^{0}=1.5>0$.
\end{exa}

\begin{rem}[Threshold analysis]\label{rem:threshold}
More generally, for the ODE reduction with $C=0$, note that
$\mathcal{N}(\phi^{0})\phi^{0}\le 0$ implies $r^{1}>0$ unconditionally
by~\eqref{eq23}.  When $\mathcal{N}(\phi^{0})\phi^{0}>0$, the non-positivity
condition $r^{1}\le 0$ becomes
\begin{equation}
\tau \ge \frac{1}{\alpha}\,\frac{(\phi^{0})^{2}-1}{(\phi^{0})^{2}+1}.\label{eq31}
\end{equation}
Whenever $|\phi^{0}|>1$, the right-hand side is strictly positive, so a
finite $\tau$ (for given $\alpha$) or, equivalently, a finite $\alpha$ (for
given $\tau$) always triggers the sign failure.  For $\phi^{0}=2$,
$\frac{(\phi^{0})^{2}-1}{(\phi^{0})^{2}+1} = 0.6$, so the
condition becomes $\tau\alpha \ge 0.6$.  Thus
\begin{equation}
\tau = 10^{-3} \;\Rightarrow\; \alpha \ge 600;\qquad
\tau = 10^{-5} \;\Rightarrow\; \alpha \ge 6\times 10^{4}.
\label{eq32}
\end{equation}
The ODE threshold \eqref{eq31} highlights the role of the
product $\tau\alpha$: sign failure requires both a sufficiently
large stiffness $\alpha$ and a sufficiently large $\tau$.
In a PDE setting the operator
$(\mathcal{I}+\tau\mathcal{G}\mathcal{L})^{-1}\mathcal{G}\mathcal{L}$
couples spatial scales in a more intricate way, and a direct
translation of the ODE threshold via $\|\mathcal{G}\mathcal{L}\|$
is overly pessimistic because it ignores the decay of Fourier
coefficients at high wave numbers for smooth solutions.
We emphasise that the above ODE counterexample is presented for clarity of exposition;
the same mechanism extends to PDEs, where high-frequency
modes play the role of large stiffness.
A PDE counterpart of this counterexample is presented in
Section~\ref{sec4}.
\end{rem}

\subsection{Structural nature of the sign-indeterminacy}

The failure analysed above is not a numerical artefact of large time steps or
poorly chosen parameters; it is a \emph{structural} consequence of
discretizing $r_{t}$ directly.

In the original SAV formulation~\eqref{eq6}, discretizing
$r_{t}$ directly by backward Euler (or Crank--Nicolson) yields a linear scalar equation
$\mathcal{A}^{n+1}r^{n+1}-\mathcal{B}^{n+1}=0$ whose constant term
$\mathcal{B}^{n+1}$ is the sum of the positive quantity $r^{n}$ and a
perturbation driven by $\mathcal{N}(\phi^{n})$.
As the sharp criterion \eqref{eq13} shows, the sign of $r^{n+1}$ is
controlled by the competition between $r^{n}$ and this perturbation,
which depends nonlinearly on the spatial structure of $\phi^{n}$ through
the operator $(\mathcal{I}+\tau\mathcal{G}\mathcal{L})^{-1}\mathcal{G}\mathcal{L}$.
Nothing in the algebraic structure of the linear equation prevents this
perturbation from overcoming $r^{n}$.

The root cause is evident from the structure of
$h(r^{n+1})=\mathcal{A}^{n+1}r^{n+1}-\mathcal{B}^{n+1}=0$:
while the slope $\mathcal{A}^{n+1}\ge 1$ is strictly positive, the intercept
$-\mathcal{B}^{n+1}$ has no fixed sign.
Geometrically, this means the line $h(\cdot)$ can cross the horizontal axis
at any point, positive, zero, or negative, depending on the data $\phi^{n}$.

The resolution, developed in Section~\ref{sec3}, is to reformulate the
discrete evolution from $r_{t}$ to $(r^{2})_{t}$.
This converts the algebraic structure completely:
from a \emph{linear} equation whose constant term has an \emph{indefinite} sign,
to a \emph{convex quadratic} whose constant term $-(r^{n})^{2}$ is
\emph{strictly negative}.
Geometrically, a convex quadratic with $h(0)<0$ and $h(+\infty)=+\infty$ must
cross the positive $r$-axis exactly once, irrespective of the data.
This structural change is the key to the unconditional positivity guarantees
proved in Theorems~\ref{thmf1} and~\ref{thmf3}.

We close this section with a remark on why the sign-indeterminacy
has largely escaped attention despite the widespread use of the original SAV method.
In the ODE reduction, the sign failure requires $|\phi^{n}|>1$ for
the double-well potential, a condition that may not arise when the
solution stays within the metastable wells.
In PDE simulations, the sign of the perturbation in \eqref{eq13}
depends on the interplay between the amplitude of $\phi^{n}$ and
the spectral distribution of the operator
$(\mathcal{I}+\tau\mathcal{G}\mathcal{L})^{-1}\mathcal{G}\mathcal{L}$:
even a perfectly smooth function can trigger the failure if it
carries sufficient amplitude at wave numbers for which the
eigenvalues of $\mathcal{G}\mathcal{L}$ are large relative to $\tau^{-1}$.

%===============================================================
\section{Positivity-preserving SAV schemes}
\label{sec3}

As analysed in the previous section, the sign-indeterminacy of the original SAV
scheme originates from discretizing $r_{t}$ directly, which yields a linear
scalar equation whose constant term is not sign-definite.
The key observation is that $r_{t} = (r^{2})_{t}/(2r)$ for $r>0$;
reformulating the continuous equations in terms of $(r^{2})_{t}$
and discretizing afterwards converts the scalar equation into a convex quadratic
with a strictly negative constant term $-(r^{n})^{2}$.

In contrast to the sharp criterion \eqref{eq13}, where positivity
must be verified at each step and may fail, the quadratic reformulation
guarantees $r^{n+1}>0$ unconditionally by its algebraic structure:
a strictly negative constant term together with a strictly convex
leading coefficient forces a unique positive root regardless of the data.
Thus the sign-indeterminacy, which Section~\ref{sec2} traced to the
linear structure of the original discretization, is resolved here not
by tuning numerical parameters but by a structural change in the
algebraic form of the scalar equation: the discrete evolution is
rewritten from $r_{t}$ to $(r^{2})_{t}$, a change that is
mathematically trivial at the continuous level yet decisive at the
discrete level.
We now develop this idea for the backward Euler (BE)
and Crank--Nicolson (CN) discretizations.

\subsection{Backward Euler scheme}

Since $r>0$, the chain rule gives $(r^2)_t=2 r\cdot r_t$.
Replacing $r_t$ in~\eqref{eq6} accordingly yields the equivalent formulation
\begin{equation}
\left\{\begin{array}{l}
\phi_{t} = -\mathcal{G}\mu,\\[2mm]
\mu = \mathcal{L}\phi + \dfrac{r}{\sqrt{\mathcal{E}_{_\mathcal{N}}[\phi]+C}}\,\mathcal{N}(\phi),\\[5mm]
(r^{2})_{t}
= \dfrac{r}{\sqrt{\mathcal{E}_{_\mathcal{N}}[\phi]+C}}\bigl(\mathcal{N}(\phi),\,\phi_{t}\bigr).
\end{array}\right.
\label{eqb1}
\end{equation}
Discretizing~\eqref{eqb1} with the backward Euler method gives the SAV-BE scheme:
\begin{equation}
\left\{\begin{array}{l}
\dfrac{\phi^{n+1}-\phi^{n}}{\tau} = -\mathcal{G}\mu^{n+1},\\[3mm]
\mu^{n+1} = \mathcal{L}\phi^{n+1}
  + \dfrac{r^{n+1}}{\sqrt{\mathcal{E}_{_\mathcal{N}}[\phi^{n}]+C}}\,\mathcal{N}(\phi^{n}),\\[5mm]
\dfrac{(r^{n+1})^{2}-(r^{n})^{2}}{\tau}
= \dfrac{r^{n+1}}{\sqrt{\mathcal{E}_{_\mathcal{N}}[\phi^{n}]+C}}
\Bigl(\mathcal{N}(\phi^{n}),\,\dfrac{\phi^{n+1}-\phi^{n}}{\tau}\Bigr),
\end{array}\right.
\label{eqb2}
\end{equation}
where $\tau>0$ is the time step size and $\phi^n$, $\mu^n$, $r^n$ denote the
numerical approximations at $t=t^n:=n\tau$.

Eliminating $\mu^{n+1}$ yields
\begin{equation}
(\mathcal{I}+\tau\mathcal{G}\mathcal{L})\phi^{n+1}
= \phi^{n} - \frac{\tau\,\mathcal{G}\mathcal{N}(\phi^{n})}{\sqrt{\mathcal{E}_{_\mathcal{N}}[\phi^{n}]+C}}\,r^{n+1}.
\label{eqb3}
\end{equation}
Setting $\phi^{n+1}=p^{n+1}+r^{n+1}q^{n+1}$, we define $p^{n+1}$ and $q^{n+1}$
as the solutions of
\begin{equation}
(\mathcal{I}+\tau\mathcal{G}\mathcal{L})\,p^{n+1} = \phi^{n},\qquad
(\mathcal{I}+\tau\mathcal{G}\mathcal{L})\,q^{n+1}
= -\frac{\tau\,\mathcal{G}\mathcal{N}(\phi^{n})}{\sqrt{\mathcal{E}_{_\mathcal{N}}[\phi^{n}]+C}}.
\label{eqb4}
\end{equation}
Substituting $\phi^{n+1}$ into the third equation of~\eqref{eqb2} gives
\begin{align}
h_{_{\rm BE}}(r^{n+1}) :=&
\left[1 - \frac{(\mathcal{N}(\phi^{n}),\,q^{n+1})}{\sqrt{\mathcal{E}_{_\mathcal{N}}[\phi^{n}]+C}}\right](r^{n+1})^{2}
- \frac{(\mathcal{N}(\phi^{n}),\,p^{n+1}-\phi^{n})}{\sqrt{\mathcal{E}_{_\mathcal{N}}[\phi^{n}]+C}}\,r^{n+1}\nonumber\\[1mm]
&- (r^{n})^{2}=0.
\label{eqb5}
\end{align}

In contrast to the linear equation~\eqref{eq11} in the original SAV method,
\eqref{eqb5} is a convex quadratic with leading coefficient at least~$1$
and constant term $-(r^{n})^{2}<0$ whenever $r^{n}>0$.
Together, these properties guarantee a unique positive root.

\begin{thm}\label{thmf1}
The SAV-BE scheme~\eqref{eqb2} admits a unique solution with $r^{n+1}>0$.
\end{thm}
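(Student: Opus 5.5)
The plan is to reduce the fully coupled scheme~\eqref{eqb2} to the scalar quadratic~\eqref{eqb5}, show that this quadratic has exactly one positive root, and then recover $\phi^{n+1}$ and $\mu^{n+1}$ from it. We argue by induction on $n$, assuming $r^{n}>0$; the base case is $r^{0}=\sqrt{\mathcal{E}_{_\mathcal{N}}[\phi^{0}]+C}>0$.

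\textbf{Linear solves.} First we check that $p^{n+1}$ and $q^{n+1}$ in~\eqref{eqb4} are uniquely defined, i.e.\ that $\mathcal{I}+\tau\mathcal{G}\mathcal{L}$ is invertible. Using the similarity transform from~\eqref{eq15},
\begin{equation*}
\mathcal{I}+\tau\mathcal{G}\mathcal{L}=\mathcal{G}^{\frac12}(\mathcal{I}+\tau\mathcal{S})\mathcal{G}^{-\frac12},
\end{equation*}
where $\mathcal{I}+\tau\mathcal{S}$ is self-adjoint and bounded below by $\mathcal{I}$, hence invertible. Consequently any solution of~\eqref{eqb2} must have the form $\phi^{n+1}=p^{n+1}+r^{n+1}q^{n+1}$, by~\eqref{eqb3}. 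Conversely, any root $r^{n+1}$ of~\eqref{eqb5}, with $\phi^{n+1}$ defined this way and $\mu^{n+1}$ defined by the second equation, satisfies all three equations of~\eqref{eqb2}. The scheme is therefore equivalent to the scalar equation~\eqref{eqb5} together with these explicit reconstructions.

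\textbf{Sign of the coefficients.} Write~\eqref{eqb5} as $a x^{2}+b x-c=0$, with $c=(r^{n})^{2}>0$ and
\begin{equation*}
a=1-\frac{(\mathcal{N}(\phi^{n}),q^{n+1})}{\sqrt{\mathcal{E}_{_\mathcal{N}}[\phi^{n}]+C}}.
\end{equation*}
By~\eqref{eq12}, which follows from~\eqref{eqb4} exactly as in the original scheme (the definition of $q^{n+1}$ is identical), the subtracted term equals $-\frac{1}{\tau}\bigl((\mathcal{G}^{-1}+\tau\mathcal{L})q^{n+1},q^{n+1}\bigr)\le 0$, so $a\ge 1$. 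The coefficient $b$ is real and of arbitrary sign; its sign plays no role in what follows.

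\textbf{Root analysis.} The discriminant satisfies $b^{2}+4ac>0$, so there are two distinct real roots. Their product is $-c/a<0$, so exactly one root is positive, namely
\begin{equation*}
r^{n+1}=\frac{-b+\sqrt{b^{2}+4ac}}{2a}.
\end{equation*}
Equivalently, $h_{_{\rm BE}}(0)=-c<0$, $h_{_{\rm BE}}(x)\to+\infty$ as $x\to\infty$, and $h_{_{\rm BE}}$ is strictly convex, so it crosses the positive axis exactly once. This closes the induction.

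\textbf{Expected obstacle.} The algebra is elementary; the delicate point is the meaning of \emph{unique}. The quadratic also has a negative root, which yields a second formal solution of~\eqref{eqb2}. The statement must therefore be read as uniqueness among solutions with $r^{n+1}>0$, consistent with the continuous constraint $r>0$. I would state this explicitly, noting that the negative root is excluded because~\eqref{eqb1} was derived under $r>0$.
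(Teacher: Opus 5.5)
Your proposal is correct and follows essentially the same route as the paper. The quadratic coefficient is bounded below by $1$ via the identity carried over from the original scheme, $h_{_{\rm BE}}(0)=-(r^{n})^{2}<0$ under the induction hypothesis, and strict convexity with $h_{_{\rm BE}}(r)\to+\infty$ yields exactly one positive root. Your extra checks add completeness without changing the argument: the invertibility of $\mathcal{I}+\tau\mathcal{G}\mathcal{L}$, the equivalence of the scheme with the scalar equation plus reconstruction, and reading ``unique'' as unique among solutions with $r^{n+1}>0$ (which matches the paper's ``selecting this root'').
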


\begin{proof}
From~\eqref{eqb4} and the same derivation as in~\eqref{eq12}, the quadratic coefficient
in~\eqref{eqb5} satisfies
\begin{equation}
1-\frac{(\mathcal{N}(\phi^{n}),\,q^{n+1})}{\sqrt{\mathcal{E}_{_\mathcal{N}}[\phi^{n}]+C}}
=1+\frac{1}{\tau}\bigl((\mathcal{G}^{-1}+\tau\mathcal{L})q^{n+1},\,q^{n+1}\bigr)\ge 1,\label{thmf1a}
\end{equation}
hence $h_{_{\rm BE}}$ is strictly convex and $\lim\limits_{r\to+\infty}h_{_{\rm BE}}(r)=+\infty$.

We prove $r^{n}>0$ for all $n$ by induction.
The base case $r^{0}=\sqrt{\mathcal{E}_{_\mathcal{N}}[\phi^{0}]+C}>0$ holds by definition.
Assume $r^{n}>0$ for some $n\ge 0$. Then $h_{_{\rm BE}}(0)=-(r^{n})^{2}<0$.
Strict convexity together with $h_{_{\rm BE}}(0)<0$ and $\lim\limits_{r\to+\infty}h_{_{\rm BE}}(r)=+\infty$
implies that $h_{_{\rm BE}}(\cdot)$ has exactly one positive root.
Selecting this root gives $r^{n+1}>0$, completing the induction.
\end{proof}

Since~\eqref{eqb5} has a negative constant term, its unique positive root
is obtained directly from the quadratic formula,
which provides an explicit guarantee of positivity.

\begin{thm}\label{thmf2}
The SAV-BE scheme~\eqref{eqb2} unconditionally satisfies the energy dissipation law
\begin{equation}
\frac{\mathcal{E}[\phi^{n+1},r^{n+1}]-\mathcal{E}[\phi^{n},r^{n}]}{\tau}
\le -(\mu^{n+1},\mathcal{G}\mu^{n+1}) \le 0,
\label{eqth1}
\end{equation}
where
\begin{equation}
\mathcal{E}[\phi^{n},r^{n}] = \frac{1}{2}(\phi^{n},\mathcal{L}\phi^{n}) + (r^{n})^{2} -C.
\label{eqth1b}
\end{equation}
\end{thm}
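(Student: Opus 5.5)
The proof follows the standard SAV energy argument, adapted to the $(r^2)_t$ discretization. We take the inner product of the first equation of \eqref{eqb2} with $\mu^{n+1}$ and the second with $\phi^{n+1}-\phi^{n}$, then combine them with the third equation. This uses only that the scheme is well posed with $r^{n+1}>0$, which Theorem~\ref{thmf1} guarantees.

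\emph{Step 1.} From the first equation,
\[
\Bigl(\mu^{n+1},\tfrac{\phi^{n+1}-\phi^{n}}{\tau}\Bigr)=-(\mu^{n+1},\mathcal{G}\mu^{n+1}).
\]

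\emph{Step 2.} Take the second equation in the inner product with $(\phi^{n+1}-\phi^{n})/\tau$:
\[
\Bigl(\mu^{n+1},\tfrac{\phi^{n+1}-\phi^{n}}{\tau}\Bigr)
=\tfrac{1}{\tau}\bigl(\mathcal{L}\phi^{n+1},\phi^{n+1}-\phi^{n}\bigr)
+\frac{r^{n+1}}{\sqrt{\mathcal{E}_{_\mathcal{N}}[\phi^{n}]+C}}\Bigl(\mathcal{N}(\phi^{n}),\tfrac{\phi^{n+1}-\phi^{n}}{\tau}\Bigr).
\]
Since $\mathcal{L}$ is self-adjoint and nonnegative, the identity $2(a,a-b)_{\mathcal{L}}=(a,a)_{\mathcal{L}}-(b,b)_{\mathcal{L}}+(a-b,a-b)_{\mathcal{L}}$ gives
\[
\bigl(\mathcal{L}\phi^{n+1},\phi^{n+1}-\phi^{n}\bigr)\ge \tfrac12(\phi^{n+1},\mathcal{L}\phi^{n+1})-\tfrac12(\phi^{n},\mathcal{L}\phi^{n}).
\]

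\emph{Step 3.} This is the key point where the quadratic reformulation helps. By the third equation of \eqref{eqb2}, the nonlinear term in Step 2 equals exactly $\bigl((r^{n+1})^2-(r^{n})^2\bigr)/\tau$. Unlike the original scheme, no $(r^{n+1}-r^n)^2$ remainder appears, so equality holds in this part.

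Combining Steps 1--3 yields
\[
\tfrac{1}{\tau}\Bigl[\tfrac12(\phi^{n+1},\mathcal{L}\phi^{n+1})+(r^{n+1})^2-\tfrac12(\phi^{n},\mathcal{L}\phi^{n})-(r^{n})^2\Bigr]
+\tfrac{1}{2\tau}\bigl(\phi^{n+1}-\phi^{n},\mathcal{L}(\phi^{n+1}-\phi^{n})\bigr)=-(\mu^{n+1},\mathcal{G}\mu^{n+1}).
\]
The constant $C$ cancels in the difference of $\mathcal{E}$. Dropping the nonnegative dissipation term gives \eqref{eqth1}, and $(\mu^{n+1},\mathcal{G}\mu^{n+1})\ge0$ because $\mathcal{G}$ is positive definite.

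There is no real obstacle here; the only care needed is the sign bookkeeping for the $\mathcal{L}$-term via the polarization identity.
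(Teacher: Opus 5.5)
Your proposal is correct and is essentially the paper's proof: you pair the first two equations of \eqref{eqb2} with $\mu^{n+1}$ and $\phi^{n+1}-\phi^{n}$, expand the $\mathcal{L}$-term by the polarization identity, replace the nonlinear term exactly by $(r^{n+1})^{2}-(r^{n})^{2}$ using the third equation, and drop the nonnegative term $\frac{1}{2}(\mathcal{L}(\phi^{n+1}-\phi^{n}),\phi^{n+1}-\phi^{n})$. One small remark: the energy identity needs only that a solution exists, not the positivity $r^{n+1}>0$ from Theorem~\ref{thmf1}.
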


\begin{proof}
Taking the $L^{2}$-inner product of~\eqref{eqb2}$_{1}$ with $\mu^{n+1}$
and of~\eqref{eqb2}$_{2}$ with $(\phi^{n+1}-\phi^{n})$, we obtain
\begin{align}
&-\tau(\mu^{n+1},\mathcal{G}\mu^{n+1})\nonumber\\
=&\,(\mathcal{L}\phi^{n+1}, \phi^{n+1}-\phi^{n})
  +\frac{r^{n+1}}{\sqrt{\mathcal{E}_{_\mathcal{N}}[\phi^{n}]+C}}(\mathcal{N}(\phi^{n}), \phi^{n+1}-\phi^{n})\nonumber\\
=&\,\frac{1}{2}\bigl[(\mathcal{L}\phi^{n+1},\phi^{n+1}) - (\mathcal{L}\phi^{n},\phi^{n})
   +(\mathcal{L}(\phi^{n+1}-\phi^{n}), \phi^{n+1}-\phi^{n})\bigr]
   + (r^{n+1})^{2} - (r^{n})^{2}\nonumber\\
\ge&\,\frac{1}{2}(\mathcal{L}\phi^{n+1},\phi^{n+1})+[(r^{n+1})^{2}-C]
   -\frac{1}{2}(\mathcal{L}\phi^{n},\phi^{n})-[(r^{n})^{2}-C],\label{eqth2}
\end{align}
where the second equality follows from multiplying~\eqref{eqb2}$_{3}$ by $\tau$.
The right-hand side of the inequality~\eqref{eqth2} is precisely
$\mathcal{E}[\phi^{n+1},r^{n+1}]-\mathcal{E}[\phi^{n},r^{n}]$.
Dividing by $\tau$ and rearranging gives the energy dissipation law~\eqref{eqth1}.
\end{proof}

\subsection{Crank--Nicolson scheme}

For second-order temporal accuracy, we rewrite the coefficient
$r/\sqrt{\mathcal{E}_{_\mathcal{N}}[\phi]+C}$ as $r^{2}/(\mathcal{E}_{_\mathcal{N}}[\phi]+C)$,
a form better suited for discretization at the half-time level
$t^{n+\frac{1}{2}}:=(n+\frac{1}{2})\tau$.  The equivalent continuous formulation then reads
\begin{equation}
\left\{\begin{array}{l}
\phi_{t} = -\mathcal{G}\mu,\\[2mm]
\mu = \mathcal{L}\phi + \dfrac{r^{2}}{\mathcal{E}_{_\mathcal{N}}[\phi]+C}\,\mathcal{N}(\phi),\\[4mm]
(r^{2})_{t}= \dfrac{r^{2}}{\mathcal{E}_{_\mathcal{N}}[\phi]+C}\bigl(\mathcal{N}(\phi),\,\phi_{t}\bigr).
\end{array}\right.\label{eqc1}
\end{equation}
Applying the Crank--Nicolson method yields
\begin{equation}
\left\{\begin{array}{l}
\dfrac{\phi^{n+1}-\phi^{n}}{\tau} = -\mathcal{G}\mu^{n+\frac{1}{2}},\\[3mm]
\mu^{n+\frac{1}{2}}=\mathcal{L}\dfrac{\phi^{n+1}+\phi^{n}}{2}
  + \dfrac{r^{n+1}r^{n}}{\mathcal{E}_{_\mathcal{N}}[\phi^{n+\frac{1}{2}}_{\ast}]+C}\,\mathcal{N}(\phi^{n+\frac{1}{2}}_{\ast}),\\[5mm]
\dfrac{(r^{n+1})^{2}-(r^{n})^{2}}{\tau}= \dfrac{r^{n+1}r^{n}}{\mathcal{E}_{_\mathcal{N}}[\phi^{n+\frac{1}{2}}_{\ast}]+C}
\Bigl(\mathcal{N}(\phi^{n+\frac{1}{2}}_{\ast}),\,\dfrac{\phi^{n+1}-\phi^{n}}{\tau}\Bigr),
\end{array}\right.\label{eqc2}
\end{equation}
where $\phi^{n+\frac{1}{2}}_{\ast}$ is an explicit second-order approximation
of $\phi(t)$ at $t=t^{n+\frac{1}{2}}$.

The product form $r^{n+1}r^{n}$ is the key design choice for the second-order scheme.
These three expressions
\begin{equation}
r^{n+1}r^{n},\qquad
\frac{1}{2}\bigl[(r^{n+1})^{2}+(r^{n})^{2}\bigr],\qquad
\Bigl(\frac{r^{n+1}+r^{n}}{2}\Bigr)^{2},\label{eqthree}
\end{equation}
are consistent with $r(t^{n+1/2})^{2}$ and preserve the
second-order accuracy of the overall Crank--Nicolson discretization.
Substituting either the arithmetic mean or the squared mean
into~\eqref{eqc2}, however, yields a cubic equation in $r^{n+1}$,
destroying the quadratic structure required for the positivity argument.
The product form alone preserves the quadratic character of the equation,
with the same strictly negative constant term $-(r^{n})^{2}$,
and thus guarantees a unique positive root by exactly the same convexity
argument used for the backward Euler scheme.

Eliminating $\mu^{n+\frac{1}{2}}$ and setting $\phi^{n+1}=p^{n+1}+r^{n+1}q^{n+1}$,
we define $p^{n+1}$ and $q^{n+1}$ as the solutions of
\begin{equation}
(\mathcal{I}+\frac{\tau}{2}\mathcal{G}\mathcal{L})\,p^{n+1}
= (\mathcal{I}-\frac{\tau}{2}\mathcal{G}\mathcal{L})\,\phi^{n},\quad
(\mathcal{I}+\frac{\tau}{2}\mathcal{G}\mathcal{L})\,q^{n+1}
= -\frac{\tau\,r^{n}\,\mathcal{G}\mathcal{N}(\phi^{n+\frac{1}{2}}_{\ast})}{\mathcal{E}_{_\mathcal{N}}[\phi^{n+\frac{1}{2}}_{\ast}]+C}.
\label{eqc3}
\end{equation}
Substituting this decomposition into the third equation of~\eqref{eqc2}
yields a quadratic equation for $r^{n+1}$:
\begin{align}
h_{_{\rm CN}}(r^{n+1}):=&
\left[1- \frac{r^{n}(\mathcal{N}(\phi^{n+\frac{1}{2}}_{\ast}),q^{n+1})}{\mathcal{E}_{_\mathcal{N}}[\phi^{n+\frac{1}{2}}_{\ast}]+C}\right]
(r^{n+1})^{2}
- \frac{r^{n}(\mathcal{N}(\phi^{n+\frac{1}{2}}_{\ast}),p^{n+1}-\phi^{n})}{\mathcal{E}_{_\mathcal{N}}[\phi^{n+\frac{1}{2}}_{\ast}]+C}\,r^{n+1}\nonumber\\[1mm]
&-(r^{n})^{2}=0.\label{eqc4}
\end{align}

Under the induction hypothesis $r^{n}>0$, we have $h_{_{\rm CN}}(0)=-(r^{n})^{2}<0$.
Moreover, exactly as in the backward Euler case, the positive definiteness of
$\mathcal{G}^{-1}+\frac{\tau}{2}\mathcal{L}$ guarantees that the coefficient of
$(r^{n+1})^{2}$ in~\eqref{eqc4} is at least~$1$, so $h_{_{\rm CN}}(\cdot)$ is strictly convex.
Together, these properties imply a unique positive root, which can be obtained from the quadratic formula.

\begin{thm}\label{thmf3}
The SAV-CN scheme~\eqref{eqc2} admits a unique solution with $r^{n+1}>0$.
\end{thm}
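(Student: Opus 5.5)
The plan mirrors the proof of Theorem~\ref{thmf1}, adapting the key identity for the leading coefficient to the Crank--Nicolson operators. First, I will check that the decomposition $\phi^{n+1}=p^{n+1}+r^{n+1}q^{n+1}$ with $p^{n+1},q^{n+1}$ from \eqref{eqc3} reproduces \eqref{eqc2}$_{1,2}$. Eliminating $\mu^{n+\frac{1}{2}}$ gives
\[
\Bigl(\mathcal{I}+\tfrac{\tau}{2}\mathcal{G}\mathcal{L}\Bigr)\phi^{n+1}=\Bigl(\mathcal{I}-\tfrac{\tau}{2}\mathcal{G}\mathcal{L}\Bigr)\phi^{n}-\frac{\tau r^{n}\mathcal{G}\mathcal{N}(\phi^{n+\frac{1}{2}}_{\ast})}{\mathcal{E}_{_\mathcal{N}}[\phi^{n+\frac{1}{2}}_{\ast}]+C}\,r^{n+1},
\]
which is linear in $r^{n+1}$. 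Moreover, $\mathcal{I}+\frac{\tau}{2}\mathcal{G}\mathcal{L}=\mathcal{G}(\mathcal{G}^{-1}+\frac{\tau}{2}\mathcal{L})$ is invertible, because $\mathcal{G}^{-1}+\frac{\tau}{2}\mathcal{L}$ is self-adjoint positive definite. Hence $p^{n+1},q^{n+1}$ exist uniquely, and for any given $r^{n+1}$ the field $\phi^{n+1}$ (and then $\mu^{n+\frac{1}{2}}$) is uniquely determined. Solving the scheme is therefore equivalent to solving the scalar equation \eqref{eqc4}.

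Next, I will bound the leading coefficient. Applying $\mathcal{G}^{-1}$ to the $q$-equation in \eqref{eqc3} and pairing with $q^{n+1}$ gives
\[
-\frac{r^{n}(\mathcal{N}(\phi^{n+\frac{1}{2}}_{\ast}),q^{n+1})}{\mathcal{E}_{_\mathcal{N}}[\phi^{n+\frac{1}{2}}_{\ast}]+C}=\frac{1}{\tau}\Bigl(\bigl(\mathcal{G}^{-1}+\tfrac{\tau}{2}\mathcal{L}\bigr)q^{n+1},q^{n+1}\Bigr)\ge 0,
\]
which is the analogue of \eqref{eq12}. So the coefficient of $(r^{n+1})^{2}$ in \eqref{eqc4} is at least $1$. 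This identity holds whatever the sign of $r^{n}$, since the factor $r^{n}$ appears in both $q^{n+1}$ and the coefficient. It follows that $h_{_{\rm CN}}$ is strictly convex and $h_{_{\rm CN}}(r)\to+\infty$ as $r\to+\infty$.

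Finally, I will run an induction on $n$. For the base case, $r^{0}=\sqrt{\mathcal{E}_{_\mathcal{N}}[\phi^{0}]+C}>0$. If $r^{n}>0$, then $h_{_{\rm CN}}(0)=-(r^{n})^{2}<0$. A strictly convex function that is negative at $0$ and tends to $+\infty$ has exactly one zero on $(0,\infty)$, by the intermediate value theorem together with convexity. Selecting that root gives a unique $r^{n+1}>0$, and hence a unique $(\phi^{n+1},\mu^{n+\frac{1}{2}},r^{n+1})$ with $r^{n+1}>0$.

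The main obstacle is stating uniqueness correctly. The quadratic has a second, negative root, so uniqueness holds only within the class $r^{n+1}>0$, which is the intended reading. Indeed, the product of the roots is $-(r^{n})^{2}/a<0$, where $a\ge1$ is the leading coefficient, so there is exactly one positive root. This can also be written explicitly via the quadratic formula.
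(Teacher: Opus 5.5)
Your proposal is correct and follows the route the paper indicates for this theorem; the paper omits the proof as parallel to Theorem~\ref{thmf1}. Pairing the $q^{n+1}$-equation in \eqref{eqc3} with $q^{n+1}$ and using positive definiteness of $\mathcal{G}^{-1}+\frac{\tau}{2}\mathcal{L}$ gives a leading coefficient of $h_{_{\rm CN}}$ of at least $1$, and $h_{_{\rm CN}}(0)=-(r^{n})^{2}<0$ with induction on $n$ then yields exactly one positive root. Your extra steps, reducing the full scheme to \eqref{eqc4} via invertibility of $\mathcal{I}+\frac{\tau}{2}\mathcal{G}\mathcal{L}$ and noting that uniqueness holds within the class $r^{n+1}>0$ because the other root is negative, make explicit what the paper leaves implicit.
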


\begin{thm}\label{thmf4}
The SAV-CN scheme~\eqref{eqc2} unconditionally satisfies the energy dissipation law
\begin{equation}
\frac{\mathcal{E}[\phi^{n+1},r^{n+1}]-\mathcal{E}[\phi^{n},r^{n}]}{\tau}
= -(\mu^{n+\frac{1}{2}},\mathcal{G}\mu^{n+\frac{1}{2}}) \le 0,
\label{eqc6}
\end{equation}
where
\begin{equation}
\mathcal{E}[\phi^{n},r^{n}] = \frac{1}{2}(\phi^{n},\mathcal{L}\phi^{n}) + (r^{n})^{2}-C.
\label{eqc6b}
\end{equation}
\end{thm}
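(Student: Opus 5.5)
The plan is to mimic the proof of Theorem~\ref{thmf2}, replacing the backward Euler test functions by their Crank--Nicolson counterparts. Because the half-level chemical potential $\mu^{n+\frac{1}{2}}$ is paired with the symmetric average $\frac{1}{2}(\phi^{n+1}+\phi^{n})$, no numerical dissipation term should appear, and we should obtain an identity rather than an inequality.

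First, take the $L^{2}$-inner product of \eqref{eqc2}$_{1}$ with $\tau\mu^{n+\frac{1}{2}}$. Using the self-adjointness of $\mathcal{G}$, this gives
\[
(\phi^{n+1}-\phi^{n},\mu^{n+\frac{1}{2}})=-\tau(\mu^{n+\frac{1}{2}},\mathcal{G}\mu^{n+\frac{1}{2}}).
\]
Second, take the inner product of \eqref{eqc2}$_{2}$ with $\phi^{n+1}-\phi^{n}$. The linear part is handled by the self-adjointness of $\mathcal{L}$:
\[
\Bigl(\mathcal{L}\tfrac{\phi^{n+1}+\phi^{n}}{2},\phi^{n+1}-\phi^{n}\Bigr)=\tfrac{1}{2}(\mathcal{L}\phi^{n+1},\phi^{n+1})-\tfrac{1}{2}(\mathcal{L}\phi^{n},\phi^{n}).
\]
The cross terms $(\mathcal{L}\phi^{n},\phi^{n+1})$ and $(\mathcal{L}\phi^{n+1},\phi^{n})$ are equal and cancel, so there is no remainder term. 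The nonlinear part is
\[
\frac{r^{n+1}r^{n}}{\mathcal{E}_{_\mathcal{N}}[\phi^{n+\frac{1}{2}}_{\ast}]+C}\bigl(\mathcal{N}(\phi^{n+\frac{1}{2}}_{\ast}),\phi^{n+1}-\phi^{n}\bigr).
\]
Multiplying \eqref{eqc2}$_{3}$ by $\tau$ shows that this equals exactly $(r^{n+1})^{2}-(r^{n})^{2}$. This step works precisely because the same product coefficient $r^{n+1}r^{n}$ appears in both the $\mu$-equation and the $r$-equation.

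Combining the two inner products gives
\[
\tfrac{1}{2}(\mathcal{L}\phi^{n+1},\phi^{n+1})+(r^{n+1})^{2}-C-\Bigl[\tfrac{1}{2}(\mathcal{L}\phi^{n},\phi^{n})+(r^{n})^{2}-C\Bigr]=-\tau(\mu^{n+\frac{1}{2}},\mathcal{G}\mu^{n+\frac{1}{2}}).
\]
Dividing by $\tau$ yields \eqref{eqc6}. The final inequality $\le 0$ follows from the positive definiteness of $\mathcal{G}$.

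The argument is purely algebraic. Its only potential obstacle is checking that the mixed $\mathcal{L}$ terms cancel exactly, which requires $\mathcal{L}$ to be self-adjoint, and that the nonlinear coefficients in \eqref{eqc2}$_{2}$ and \eqref{eqc2}$_{3}$ coincide. Neither positivity of $r^{n+1}$ (Theorem~\ref{thmf3}) nor any time-step restriction is needed, which is why the dissipation law holds unconditionally.
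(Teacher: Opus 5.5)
Your proposal is correct and follows the route the paper intends: the paper omits the proof, saying it parallels that of Theorem~\ref{thmf2}. That means testing \eqref{eqc2}$_{1}$ with $\mu^{n+\frac{1}{2}}$ and \eqref{eqc2}$_{2}$ with $\phi^{n+1}-\phi^{n}$, then replacing the nonlinear term with $(r^{n+1})^{2}-(r^{n})^{2}$ via $\tau$ times \eqref{eqc2}$_{3}$. In the CN case the symmetric average $\frac{1}{2}(\phi^{n+1}+\phi^{n})$ removes the numerical-dissipation term, so you get an equality rather than the inequality of the BE proof.
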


The proofs of Theorems~\ref{thmf3} and~\ref{thmf4} parallel those of Theorems~\ref{thmf1} and~\ref{thmf2}, respectively, and are omitted for brevity.

%Note that the SAV-CN scheme satisfies an \emph{exact} energy equality,
%whereas the SAV-BE scheme yields only an inequality---this reflects the symmetric
%character of the Crank--Nicolson discretization, which eliminates the
%numerical dissipation term $(\mathcal{L}(\phi^{n+1}-\phi^{n}), \phi^{n+1}-\phi^{n})$
%present in the BE proof.

The proposed reformulation eliminates the sign-indeterminacy of the original
SAV method while preserving the square-root definition of~$r$
and the computational cost: two constant-coefficient linear elliptic solves
plus one quadratic formula per time step.
Both schemes can be naturally combined with relaxed~\cite{JiangM22,LiuZ24,Huang24}
or weighted~\cite{Huang26} SAV approaches, which improve the consistency
between $r^{n+1}$ and $\sqrt{\mathcal{E}_{_\mathcal{N}}[\phi^{n+1}]+C}$
through a post-processing step that does not interfere with the
positivity guarantee established here.

%===============================================================
\section{Numerical results}
\label{sec4}

In this section, we present numerical experiments to verify the positivity preservation,
energy stability, and convergence rates of the proposed schemes.
Spatial discretization uses the Fourier spectral method with periodic boundary conditions,
and numerical integration is performed by the composite trapezoidal rule.
The intermediate value $\phi^{n+\frac{1}{2}}_{\ast}$ required in the SAV-CN scheme
is computed via a backward Euler step of size $\frac{\tau}{2}$.

We consider the commonly used Ginzburg--Landau free energy~\cite{ShenJ18,Huang19b,JiangM22}
\begin{equation}
\mathcal{E}[\phi]=\int_{\Omega}\Bigl[\frac{\varepsilon^{2}}{2}|\nabla\phi|^{2}
+\frac{1}{4}(1-\phi^{2})^{2}\Bigr]\mathrm{d}\bm{x},
\label{eqe1}
\end{equation}
where $0<\varepsilon\ll 1$ governs the interfacial width.
Following the SAV decomposition, we set
$\mathcal{L}=-\varepsilon^{2}\Delta+\gamma$ and
$\mathcal{E}_{_\mathcal{N}}[\phi]=\int_{\Omega}F(\phi)\,\mathrm{d}\bm{x}$
with $F(\phi)=\frac{1}{4}(\phi^{2}-1-\gamma)^{2}$, $\gamma\ge 0$.
The $H^{-1}$ gradient flow ($\mathcal{G}=-\Delta$) then yields the Cahn--Hilliard equation.
In the examples below we take $\varepsilon=0.05$, $\gamma=4$ and $C=0$.

Before presenting systematic convergence tests,
we contrast the original and the proposed positivity-preserving SAV-BE scheme on a single
time step of the 2D Cahn--Hilliard equation on
$\Omega=[0,2\pi)^{2}$ with $256^{2}$ Fourier modes.
Take the initial condition
\begin{equation}
\phi(x,y,0) = 4\cos(20x)\sin(20y),\label{eqs1}
\end{equation}
and the time step $\tau=10^{-3}$, for which
$r^{0}= \sqrt{\mathcal{E}_{_\mathcal{N}}[\phi^{0}]+C}\approx 14.3966>0$.
The original SAV-BE scheme~\eqref{eq7} yields
\begin{equation}
r^{1}_{\rm original}\approx -1.6069<0.\label{eqs2}
\end{equation}
The proposed positivity-preserving SAV-BE scheme~\eqref{eqb2} gives $h_{_{\rm BE}}(r^{n+1})=0$ with constant term
$-(r^{0})^{2}<0$, whose unique positive root is
\begin{equation}
r^{1}_{\rm new}\approx 4.0480>0.\label{eqs3}
\end{equation}
Both schemes start from the same $r^{0}>0$; the original produces
a negative auxiliary variable, while the new scheme preserves
positivity, confirming the unconditional guarantee of
Theorem~\ref{thmf1}.

We note that $\tau=10^{-3}$ is already within the typical range
used in phase-field simulations; the sign failure persists because
in Fourier space $\mathcal{G}\mathcal{L}$ acts as multiplication by
$\lambda(\bm{k})=|\bm{k}|^{2}(\varepsilon^{2}|\bm{k}|^{2}+\gamma)$;
for $\cos(20x)\sin(20y)$ the wave vector is
$\bm{k}=(\pm20,\pm20)$, hence $|\bm{k}|^{2}=20^{2}+20^{2}=800$,
giving $\lambda=800(\varepsilon^{2}\cdot800+\gamma)=4800$,
so that $\tau\lambda/(1+\tau\lambda)\approx 0.8276$ even at
$\tau=10^{-3}$, illustrating that the problem is structural
rather than a large-$\tau$ artefact.
A practical consequence is that, for a given spatial
resolution, the time step to guarantee positivity may be far smaller than what is practical
for phase-field simulations; high-frequency modes keep the factor
$\tau\lambda/(1+\tau\lambda)$ close to unity even when $\tau$ is
reduced to values within the typical range; enforcing positivity
through $\tau$ alone would require time steps so small as to be
computationally prohibitive.
The new scheme removes this concern entirely by making the positivity of $r^{n+1}$ unconditional.

In the following, the initial condition is prescribed as
\begin{equation}
\phi(x, y, 0) = \tanh\left(\frac{\operatorname{dist}((x,y), \Gamma_{0})}{\sqrt{2\varepsilon}}\right),\label{eqee2}
\end{equation}
where $\Gamma_{0}$ is the closed parametric curve~\cite{Huang26}
\begin{equation}
\Gamma_{0} = \left\{(x,y)\mid x = \cos\theta,\; y = 2\sin\theta - 1.9\sin^3\theta,\; \theta\in[0,2\pi)\right\},
\end{equation}
and $\operatorname{dist}(\cdot, \Gamma_{0})$ denotes the signed distance to $\Gamma_{0}$.
The computational domain $\Omega = [0,3)^2$ is discretized using $256^2$ Fourier modes.

Table~\ref{tab1} lists the $l^{2}$- and $l^{\infty}$-errors and convergence rates
of the proposed positivity-preserving SAV-BE and SAV-CN schemes at $T=0.5$ under successively refined time steps.
The SAV-BE scheme exhibits first-order accuracy while the SAV-CN scheme
achieves second-order accuracy, consistent with theoretical expectations.

\begin{table}[!htp]
\renewcommand\arraystretch{1.2}
\centering
\caption{Temporal errors and convergence rates of the positivity-preserving SAV-BE and SAV-CN schemes at $T=0.5$.}\label{tab1}
\tabcolsep 0.02in
\resizebox{\textwidth}{!}{%
\begin{tabular}{@{}c c c c c c c c c@{}}
\toprule
\multirow{2}{*}{$\tau$} &
\multicolumn{4}{c}{SAV-BE} &
\multicolumn{4}{c}{SAV-CN} \\
\cmidrule(lr){2-5} \cmidrule(lr){6-9}
& $\|\phi_{\tau} - \phi_{\tau/2}\|_{l^{2}}$ & Rate &
  $\|\phi_{\tau} - \phi_{\tau/2}\|_{l^{\infty}}$ & Rate &
  $\|\phi_{\tau} - \phi_{\tau/2}\|_{l^{2}}$ & Rate &
  $\|\phi_{\tau} - \phi_{\tau/2}\|_{l^{\infty}}$ & Rate \\
\midrule
$0.5^{0} \times 10^{-4}$
    & 7.5869e-03 & --    & 1.5912e-02 & --
    & 2.4317e-04 & --    & 5.2598e-04 & -- \\
$0.5^{1} \times 10^{-4}$
    & 3.8345e-03 & 0.9845 & 8.0269e-03 & 0.9872
    & 7.0661e-05 & 1.7830 & 1.5352e-04 & 1.7766 \\
$0.5^{2} \times 10^{-4}$
    & 1.9277e-03 & 0.9921 & 4.0315e-03 & 0.9935
    & 1.9559e-05 & 1.8531 & 4.2739e-05 & 1.8448 \\
$0.5^{3} \times 10^{-4}$
    & 9.6654e-04 & 0.9960 & 2.0208e-03 & 0.9964
    & 5.2371e-06 & 1.9010 & 1.1532e-05 & 1.8899 \\
$0.5^{4} \times 10^{-4}$
    & 4.8396e-04 & 0.9979 & 1.0117e-03 & 0.9981
    & 1.3772e-06 & 1.9270 & 3.0655e-06 & 1.9115 \\
$0.5^{5} \times 10^{-4}$
    & 2.4217e-04 & 0.9989 & 5.0623e-04 & 0.9989
    & 3.6087e-07 & 1.9322 & 8.1522e-07 & 1.9109 \\
\bottomrule
\end{tabular}}%
\end{table}

\begin{figure}[htp]
\centering
\includegraphics[width=12cm]{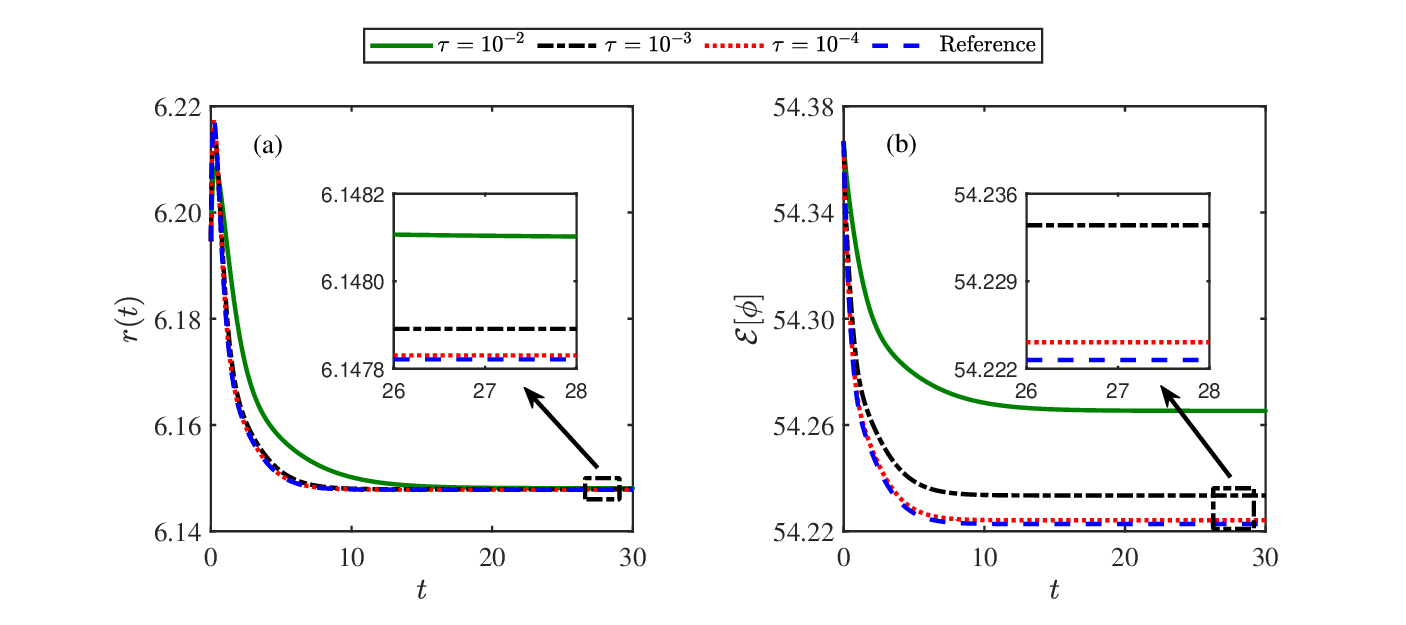}
\caption{Time evolution of the (a) discrete auxiliary variable $r^{n}$ and (b) discrete energy
         $\mathcal{E}[\phi^{n},r^{n}]$ for the positivity-preserving SAV-BE scheme~\eqref{eqb2}.
         The reference solution is obtained using a second-order Lagrange multiplier
         scheme~\cite{ChengQ20} with $\tau=10^{-5}$.}\label{fig1}
\end{figure}

\begin{figure}[htp]
\centering
\includegraphics[width=12cm]{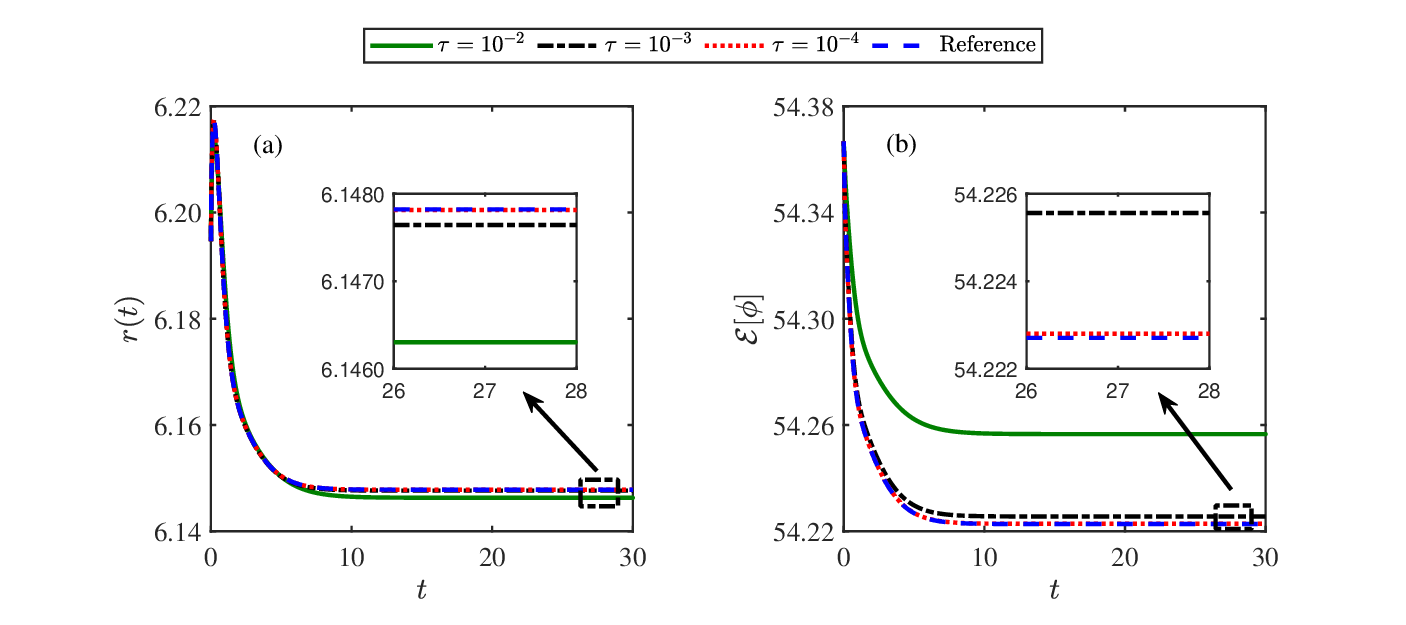}
\caption{Time evolution of the (a) discrete auxiliary variable $r^{n}$ and (b) discrete energy
         $\mathcal{E}[\phi^{n},r^{n}]$ for the positivity-preserving SAV-CN scheme~\eqref{eqc2}.
         The reference solution is obtained using a second-order Lagrange multiplier
         scheme~\cite{ChengQ20} with $\tau=10^{-5}$.}\label{fig2}
\end{figure}

Figures~\ref{fig1} and~\ref{fig2} display the evolution of the auxiliary
variable $r^{n}$ and the discrete energy $\mathcal{E}[\phi^{n},r^{n}]$
under the positivity-preserving SAV-BE and SAV-CN schemes, respectively.
Two observations are noteworthy.
First, for all time-step sizes tested, $r^{n}$ remains strictly positive
throughout the entire simulation, confirming the unconditional positivity
guarantee of Theorems~\ref{thmf1} and~\ref{thmf3}.
Second, the discrete energy decays monotonically in every case,
verifying the unconditional energy stability proved in
Theorems~\ref{thmf2} and~\ref{thmf4}.
As $\tau$ decreases, both $r^{n}$ and $\mathcal{E}[\phi^{n},r^{n}]$ converge
to the reference solution.
At comparable step sizes, the SAV-CN scheme yields energy and auxiliary
variable profiles substantially closer to the reference than the SAV-BE
scheme, consistent with its second-order temporal accuracy.

\section{Conclusion}
\label{sec5}

This work has resolved a long-standing but overlooked issue of
sign-indeterminacy of the auxiliary variable in the original SAV method
while preserving its square-root definition.
We first gave a quantitative analysis of the original scheme: a sharp
criterion and a sufficient condition for $r^{n+1}>0$ were derived, and an
explicit counterexample demonstrated that sign failure is structural
rather than a numerical pathology; a single-step PDE comparison
further illustrated the contrast between the original and the new
scheme on a Cahn--Hilliard problem.
The central idea is to reformulate the discrete evolution from
$r_t$ to $(r^{2})_t$, which transforms the scalar auxiliary equation
from a linear equation with an indefinite constant term into a
convex quadratic whose constant term $-(r^{n})^{2}$ is strictly negative,
thereby always admitting a unique positive root obtained from the
quadratic formula.
For the Crank--Nicolson scheme, the product-form discretization
$r^{n+1}r^{n}$ is essential: it preserves the quadratic structure,
whereas the arithmetic or squared means lead to cubic equations
that admit no comparable guarantee.
The resulting SAV-BE and SAV-CN schemes incur exactly the same
computational cost as the original SAV method and are rigorously
proved to be unconditionally energy-stable.

Several directions merit further investigation.
First, the proposed reformulation can be combined with
relaxed~\cite{JiangM22,LiuZ24,Huang24}
or weighted~\cite{Huang26} SAV frameworks to simultaneously improve
consistency between $r^{n+1}$ and $\sqrt{\mathcal{E}_{_\mathcal{N}}[\phi^{n+1}]+C}$
while retaining the unconditional positivity guarantee.
Second, higher-order extensions via diagonally implicit
Runge--Kutta (DIRK) methods are natural: each DIRK stage reduces to
a BE-type quadratic, so positivity carries over directly.
In contrast, any linear multi-step method beyond the one-step case,
including BDF-$k$ ($k\ge 2$) and the general multi-step schemes
studied in~\cite{Huang23},
introduces more than two time levels into the $(r^{2})_{t}$
discretisation, which breaks the quadratic structure of the
constant term and precludes an unconditional positivity proof.
Third, rigorous error estimates analogous to those for the
original SAV method~\cite{ShenJ18b,ChenH20}
would complete the theoretical foundation.
Finally, although the present analysis focuses on gradient flows,
the $(r^{2})_{t}$ reformulation is purely algebraic in nature
and may also apply to SAV discretisations of Hamiltonian
and other energy-conserving systems.

%---------------------------------------------------------------------
\begin{acknowledgement}
The numerical calculations in this paper have been done on the supercomputing system in the Supercomputing Center of Wuhan University.
\end{acknowledgement}
\medskip
\noindent{\textbf{Funding}} 
~This work was partially supported by the National Natural Science Foundation of China (Nos.~12001210, 12131010), 
the Natural Science Foundation of Henan Province (No. 252300420308) 
and the Key Scientific Research Project of Universities in Henan Province (No. 27A110002).
% Authors must disclose all relationships or interests that
% could have direct or potential influence or impart bias on
% the work:

\medskip
\noindent{\textbf{Data Availability}} ~The code for the current study is available from the corresponding author on reasonable request.

\medskip
\noindent{\textbf{Declarations}}\\
\noindent{\textbf{Conflict of interest}} ~The authors declare no competing interests.

%---------------------------------------------------------------------

\end{document}